\theoremstyle{plain}
\newtheorem{thm}{Theorem}[section]
\newtheorem{lem}[thm]{Lemma}
\newtheorem{prop}[thm]{Proposition}
\newtheorem{conj}[thm]{Conjecture}
\theoremstyle{definition}
\theoremstyle{remark}
\newtheorem{rem}[thm]{Remark}
 \font\cyr=wncyr10
 \newcommand{\nc}{\newcommand}
\nc{\per}[1]{\underset{#1}{\boldsymbol \pi}\,}
 \nc{\MT}{{\rm MT}}
  \nc{\bgz}{{\bar{\gz}}}
 \nc{\wt}{{\rm wt}}
 \nc{\wht}{{\widehat}}
 \nc{\bwg}{{\bigwedge}}
 \nc{\mmu}{{\boldsymbol{\mu}}}
 \nc{\mal}{{{\scriptstyle \maltese}}}
 \nc{\fA}{{\mathfrak A}}
 \nc{\HH}{{\mathfrak H}}
 \nc{\ra}{\rightarrow}
 \nc{\ors}{{\vec s\,}}
 \nc{\os}{{\overset}}
 \nc{\G}{{\mathbb G}}
 \nc{\Z}{{\mathbb Z}}
 \nc{\R}{{\mathbb R}}
 \nc{\N}{{\mathbb N}}
 \nc{\ZN}{{\mathbb Z_{\ge 0}}}
 \nc{\Q}{{\mathbb Q}}
 \nc{\C}{{\mathbb C}}
 \nc{\Cnn}{{\mathbb C}_{\ge 0}}
 \nc{\Cp}{{\mathbb C}_{>0}}
 \nc{\MPV}{{\mathcal{MPV}}}
 \nc{\tB}{{\tilde B}}
 \nc{\tI}{{\tilde I}}
 \nc{\tJ}{{\tilde J}}
 \nc{\tK}{{\tilde K}}
 \nc{\Li}{{\rm Li}}
 \nc{\suf}{{\ast\,}}
 \nc{\sufq}{{\ast_q\,}}
 \nc{\gam}{{\gamma}}
 \nc{\gG}{{\Gamma}}
 \nc{\om}{{\omega}}
 \nc{\vep}{{\varepsilon}}
 \nc{\ga}{{\alpha}}
 \nc{\gl}{{\lambda}}
 \nc{\gb}{{\beta}}
 \nc{\gd}{{\delta}}
 \nc{\gs}{{\sigma}}
 \nc{\gS}{{\Sigma}}
 \nc{\gk}{{\kappa}}
 \nc{\tgz}{{\tilde{\zeta}}}
 \nc{\gO}{{\Omega}}
 \nc{\sif}{{\mathcal S}}
 \nc{\gt}{{\tau}}
\nc{\gz}{{\zeta}}
 \nc{\Lra}{\Longrightarrow}
 \nc{\lra}{\longrightarrow}
 \nc{\fS}{{\mathfrak S}}
 \nc{\DD}{{\mathfrak D}}
 \nc{\Llra}{\Longleftrightarrow}
 \nc{\ol}{\overline}
 \nc{\lms}{\longmapsto}
 \nc{\cv}{{{\mathsf c}{\mathsf v}}}
 \nc{\zq}{{\zeta_q}}
 \nc\qup{{q\uparrow 1}}
 \nc{\us}{\underset}
 \nc{\tn}{{\tilde{n}}}
 \nc{\gD}{{\Delta}}
 \nc{\bi}{{\bf i}}
 \nc{\bfone}{{\bf 1}}
 \nc{\bfa}{{\bf a}}
 \nc{\bfb}{{\bf b}}
 \nc{\bfc}{{\bf c}}
 \nc{\bfd}{{\bf d}}
 \nc{\bfe}{{\bf e}}
 \nc{\bff}{{\bf f}}
 \nc{\bfg}{{\bf g}}
 \nc{\bfh}{{\bf h}}
 \nc{\bfi}{{\bf i}}
 \nc{\bfj}{{\bf j}}
 \nc{\bfn}{{\bf n}}
 \nc{\bfl}{{\bf l}}
 \nc{\bfk}{{\bf k}}
 \nc{\bfm}{{\bf m}}
 \nc{\bfo}{{\bf o}}
 \nc{\bfp}{{\bf p}}
 \nc{\bfq}{{\bf q}}
 \nc{\bfr}{{\bf r}}
 \nc{\tbfs}{{\tilde{\bf s}}}
 \nc{\bfs}{{\bf s}}
 \nc{\hbfs}{{\hat{\bf s}}}
 \nc{\hs}{{\hat{s}}}
 \nc{\ts}{\tilde{s}}
 \nc{\bft}{{\bf t}}
 \nc{\bfu}{{\bf u}}
 \nc{\bfv}{{\bf v}}
 \nc{\bfw}{{\bf w}}
 \nc{\bfx}{{\bf x}}
 \nc{\bfy}{{\bf y}}
 \nc{\bfz}{{\bf z}}
 \nc{\bfB}{{\bf B}}
 \nc{\bfP}{{\bf P}}
 \nc{\bfQ}{{\bf Q}}
 \nc{\bfY}{{\bf Y}}
 \nc{\bfgb}{{\boldsymbol \gb}}
 \nc{\bfga}{{\boldsymbol \ga}}
 \nc{\bfrho}{{\boldsymbol \rho}}
 \nc{\bfchi}{{\boldsymbol \chi}}
 \nc{\QX}{{\Q\langle \bfX\rangle}}
 \nc{\QY}{{\Q\langle \bfY\rangle}}
 \nc{\CX}{{\C\langle \bfX\rangle}}
 \nc{\CY}{{\C\langle \bfY\rangle}}
 \nc{\QXX}{{\Q\langle\!\langle \bfX\rangle\!\rangle}}
 \nc{\QYY}{{\Q\langle\!\langle \bfY\rangle\!\rangle}}
 \nc{\CXX}{{\C\langle\!\langle \bfX\rangle\!\rangle}}
 \nc{\CYY}{{\C\langle\!\langle \bfY\rangle\!\rangle}}
 \nc{\bbA}{{\mathbb A}}
 \nc{\bbB}{{\mathbb B}}
 \nc{\bbC}{{\mathbb C}}
 \nc{\bbD}{{\mathbb D}}
 \nc{\bbE}{{\mathbb E}}
 \nc{\bbF}{{\mathbb F}}
 \nc{\bbG}{{\mathbb G}}
 \nc{\bbH}{{\mathbb H}}
 \nc{\bbI}{{\mathbb I}}
 \nc{\bbJ}{{\mathbb J}}
 \nc{\bbK}{{\mathbb K}}
 \nc{\bbL}{{\mathbb L}}
 \nc{\bbM}{{\mathbb M}}
 \nc{\bbN}{{\mathbb N}}
 \nc{\bbO}{{\mathbb O}}
 \nc{\bbP}{{\mathbb P}}
 \nc{\bbQ}{{\mathbb Q}}
 \nc{\bbR}{{\mathbb R}}
 \nc{\bbS}{{\mathbb S}}
 \nc{\bbT}{{\mathbb T}}
 \nc{\bbU}{{\mathbb U}}
 \nc{\bbV}{{\mathbb V}}
 \nc{\bbW}{{\mathbb W}}
 \nc{\bbX}{{\mathbb X}}
 \nc{\bbY}{{\mathbb Y}}
 \nc{\bbZ}{{\mathbb Z}}
 \nc{\bba}{{\mathbb a}}
 \nc{\bbb}{{\mathbb b}}
 \nc{\bbc}{{\mathbb c}}
 \nc{\bbd}{{\mathbb d}}
 \nc{\bbe}{{\mathbb e}}
 \nc{\bbf}{{\mathbb f}}
 \nc{\bbg}{{\mathbb g}}
 \nc{\bbh}{{\mathbb h}}
 \nc{\bbi}{{\mathbb i}}
 \nc{\bbk}{{\mathbb k}}
 \nc{\bbl}{{\mathbb l}}
 \nc{\bbm}{{\mathbb m}}
 \nc{\bbn}{{\mathbb n}}
 \nc{\bbo}{{\mathbb o}}
 \nc{\bbp}{{\mathbb p}}
 \nc{\bbq}{{\mathbb q}}
 \nc{\bbr}{{\mathbb r}}
 \nc{\bbs}{{\mathbb s}}
 \nc{\bbt}{{\mathbb t}}
 \nc{\bbu}{{\mathbb u}}
 \nc{\bbv}{{\mathbb v}}
 \nc{\bbw}{{\mathbb w}}
 \nc{\bbx}{{\mathbb x}}
 \nc{\bby}{{\mathbb y}}
 \nc{\bbz}{{\mathbb z}}
 \nc{\calA}{{\mathcal A}}
 \nc{\calB}{{\mathcal B}}
 \nc{\calC}{{\mathcal C}}
 \nc{\calD}{{\mathcal D}}
 \nc{\calE}{{\mathcal E}}
 \nc{\calF}{{\mathcal F}}
 \nc{\calG}{{\mathcal G}}
 \nc{\calH}{{\mathcal H}}
 \nc{\calI}{{\mathcal I}}
 \nc{\calJ}{{\mathcal J}}
 \nc{\tcalI}{{\tilde{\mathcal I}}}
 \nc{\tcalJ}{{\tilde{\mathcal J}}}
 \nc{\calK}{{\mathcal K}}
 \nc{\calL}{{\mathcal L}}
 \nc{\calM}{{\mathcal M}}
 \nc{\calN}{{\mathcal N}}
 \nc{\calO}{{\mathcal O}}
 \nc{\calP}{{\mathcal P}}
 \nc{\calQ}{{\mathcal Q}}
 \nc{\calR}{{\mathcal R}}
 \nc{\calS}{{\mathcal S}}
 \nc{\calT}{{\mathcal T}}
 \nc{\calU}{{\mathcal U}}
 \nc{\calV}{{\mathcal V}}
 \nc{\calW}{{\mathcal W}}
 \nc{\calX}{{\mathcal X}}
 \nc{\calY}{{\mathcal Y}}
 \nc{\calZ}{{\mathcal Z}}
  \nc{\cala}{{\mathcal a}}
 \nc{\calb}{{\mathcal b}}
 \nc{\calc}{{\mathcal c}}
 \nc{\cald}{{\mathcal d}}
 \nc{\cale}{{\mathcal e}}
 \nc{\calf}{{\mathcal f}}
 \nc{\calg}{{\mathcal g}}
 \nc{\calh}{{\mathcal h}}
 \nc{\cali}{{\mathcal i}}
 \nc{\calj}{{\mathcal j}}
 \nc{\calk}{{\mathcal k}}
 \nc{\call}{{\mathcal l}}
 \nc{\calm}{{\mathcal m}}
 \nc{\caln}{{\mathcal n}}
 \nc{\calo}{{\mathcal o}}
 \nc{\calp}{{\mathsf p}}
 \nc{\calq}{{\mathcal q}}
 \nc{\calr}{{\mathcal r}}
 \nc{\cals}{{\mathcal s}}
 \nc{\calt}{{\mathcal t}}
 \nc{\calu}{{\mathcal u}}
 \nc{\calv}{{\mathcal v}}
 \nc{\calw}{{\mathcal w}}
 \nc{\calx}{{\mathcal x}}
 \nc{\caly}{{\mathcal y}}
 \nc{\calz}{{\mathcal z}}
 \nc{\frakA}{{\mathfrak A}}
 \nc{\frakB}{{\mathfrak B}}
 \nc{\frakC}{{\mathfrak C}}
 \nc{\frakD}{{\mathfrak D}}
 \nc{\frakE}{{\mathfrak E}}
 \nc{\frakF}{{\mathfrak F}}
 \nc{\frakG}{{\mathfrak G}}
 \nc{\frakH}{{\mathfrak H}}
 \nc{\frakI}{{\mathfrak I}}
 \nc{\frakJ}{{\mathfrak J}}
 \nc{\frakK}{{\mathfrak K}}
 \nc{\frakL}{{\mathfrak L}}
 \nc{\frakM}{{\mathfrak M}}
 \nc{\frakN}{{\mathfrak N}}
 \nc{\frakO}{{\mathfrak O}}
 \nc{\frakP}{{\mathfrak P}}
 \nc{\frakQ}{{\mathfrak Q}}
 \nc{\frakR}{{\mathfrak R}}
 \nc{\frakS}{{\mathfrak S}}
 \nc{\frakT}{{\mathfrak T}}
 \nc{\frakU}{{\mathfrak U}}
 \nc{\frakV}{{\mathfrak V}}
 \nc{\frakW}{{\mathfrak W}}
 \nc{\frakX}{{\mathfrak X}}
 \nc{\frakY}{{\mathfrak Y}}
 \nc{\frakZ}{{\mathfrak Z}}
 \nc{\fraka}{{\mathfrak a}}
 \nc{\frakb}{{\mathfrak b}}
 \nc{\frakc}{{\mathfrak c}}
 \nc{\frakd}{{\mathfrak d}}
 \nc{\frake}{{\mathfrak e}}
 \nc{\frakf}{{\mathfrak f}}
 \nc{\frakg}{{\mathfrak g}}
 \nc{\frakh}{{\mathfrak h}}
 \nc{\fraki}{{\mathfrak i}}
 \nc{\frakj}{{\mathfrak j}}
 \nc{\frakk}{{\mathfrak k}}
 \nc{\frakl}{{\mathfrak l}}
 \nc{\frakm}{{\mathfrak m}}
 \nc{\frakn}{{\mathfrak n}}
 \nc{\frako}{{\mathfrak o}}
 \nc{\frakp}{{\mathfrak p}}
 \nc{\frakq}{{\mathfrak q}}
 \nc{\frakr}{{\mathfrak r}}
 \nc{\fraks}{{\mathfrak s}}
 \nc{\frakt}{{\mathfrak t}}
 \nc{\fraku}{{\mathfrak u}}
 \nc{\frakv}{{\mathfrak v}}
 \nc{\frakw}{{\mathfrak w}}
 \nc{\frakx}{{\mathfrak x}}
 \nc{\fraky}{{\mathfrak y}}
 \nc{\frakz}{{\mathfrak z}}
 \nc{\sha}{{\mbox{\cyr x}}}
\nc{\so}{{{\mathfrak{so}}(5)}}
\nc{\slfour}{{{\mathfrak{sl}}(4)}}
 \nc{\sld}{{{\mathfrak{sl}}(d+1)}}
 \nc{\slr}{{{\mathfrak{sl}}(r+1)}}
 \nc{\slrr}{{{\mathfrak{sl}}(r+2)}}
 \nc{\uds}{{\underline{s}}}
\nc{\va}{{\vec a}}
\nc{\vb}{{\vec b}}
\nc{\vc}{{\vec c}}
\nc{\vdta}{{\vec \delta}}
\nc{\ve}{{\vec e}}
\nc{\vm}{{\vec m}}
\nc{\vp}{{\vec p}}
\nc{\vn}{{\vec n}}
\nc{\vmu}{{\vec \mu}}
\nc{\vr}{{\vec r}}
\nc{\vs}{{\vec s}}
\nc{\vt}{{\vec t}}
\nc{\vu}{{\vec u}}
\nc{\vx}{{\vec x}}
\nc{\vC}{{\vec C}}
\nc{\vv}{{\bf v}}
\begin{document}

\title[Witten multiple zeta function]
{Alternating Euler sums and special values of
Witten multiple zeta function attached to $\so$}
\author{Jianqiang Zhao}

\subjclass{Primary: 11M41; Secondary: 40B05}

\keywords{Zeta-function of Witten's type, reducibility.}

\maketitle
\begin{center}
Department of Mathematics, Eckerd College, St. Petersburg, FL 33711, USA\\
Max-Planck Institut f\"ur Mathematik, Vivatsgasse 7, 53111 Bonn, Germany\\
Email: zhaoj@eckerd.edu
\end{center}

\vskip0.6cm

\noindent{\small {\bf Abstract.}
In this note we shall study the Witten multiple zeta function
associated to the Lie algebra $\so$ defined by Matsumoto. Our
main result shows that its special values at nonnegative integers
are always expressible by alternating Euler sums. More precisely,
every such special value of weight $w\ge 3$ is a finite rational linear combination
of alternating Euler sums of weight $w$ and depth at most two,
except when the only nonzero argument is one
of the two last variables in which case $\gz(w-1)$ is needed.
%\end{abstract}

\vskip0.6cm

\section{Introduction}
The Witten multiple zeta function associated
to the Lie algebra $\so$ is defined by Matsumoto
as follows:
\begin{equation}\label{equ:gzsoDef}
\gz_\so(s_1,\dots,s_4)=\sum_{m,n=1}^\infty
\frac{1}{m^{s_1}n^{s_2}(m+n)^{s_3}(m+2n)^{s_4}},
\end{equation}
which converges whenever $\Re(s_1+s_3+s_4)>1$,
$\Re(s_2+s_3+s_4)>1$ and $\Re(s_1+s_2+s_3+s_4)>2$.
We call $s_1+s_2+s_3+s_4$ the \emph{weight}.
Essouabri \cite{Es} and Matsumoto \cite{Ma} have defined more general
multiple zeta functions and studied their analytic continuations.
However, the function in \eqref{equ:gzsoDef} itself already generalizes
both the zeta function $\gz_\so(s,s,s,s)$ suggested by
Zagier \cite[\S7]{Zag} after Witten \cite{W}
and the Mordell-Tornheim double zeta function \cite{Mord,Torn}
(see \S\ref{sec:MT}). Zagier and Garoufalidis independently
showed that for every positive integer $m$ there is some $c(m)\in\Q$
such that
\begin{equation}\label{equ:witten}
\gz_\so(2m,2m,2m,2m)=c(m)\cdot\pi^{8m}.
\end{equation}
Special values like these are the main objects of study in this note.

In \cite{Ts} Tsumura considered the special values of
of \eqref{equ:gzsoDef} at nonnegative integers.
In particular, when the weight is an odd number
he showed that the special values of \eqref{equ:gzsoDef}
are $\Q$-linear combinations of products of Riemann zeta values
at positive integers, with slightly stronger restrictions on
the arguments than just to guarantee convergence.
Since the function $\gz_\so(s_1,\dots,s_4)$
has depth two this type of results is commonly
referred to as a ``parity'' relation. For example,
the (Euler-Zagier) multiple zeta value (MZV for short)
at positive integers
\begin{equation}\label{equ:gzDef}
  \gz(s_1,\dots,s_d):=\sum_{m_1> \dots>m_d\ge 1}
 m_1^{-s_1} m_2^{-s_2}\cdots  m_d^{-s_d}
\end{equation}
has the well-known property that
if the weight ($>2$) and the depth have different parities
then it can be written as a $\Q$-linear combination
of products of MZVs of lower depths (see \cite{IKZ,Tsu1}).
In general it is expected that when the weight is even (and
large enough) we do not always have such relations.

In this note we will investigate all the convergent special
values of $\gz_\so(s_1,\dots,s_4)$ at nonnegative integers without
any parity restriction on the weight. It turns out that they are closely
related to the \emph{alternating Euler sums} (see \S\ref{sec:Esum}).
Our main result  is
\begin{thm} \label{thm:main}
Let $s_1,\dots,s_4$ be nonnegative integers such that
$s_1+s_3+s_4>1$, $s_2+s_3+s_4>1$ and $w:=s_1+s_2+s_3+s_4>2$.
Then $\gz_\so(s_1,\dots,s_4)$
can be expressed as a finite $\Q$-linear combination of alternating
Euler sums of weight $w$ and depths at most two, except
when $s_1=s_2=s_3=0$ or $s_1=s_2=s_4=0$ in which cases $\gz(w-1)$ is needed.
\end{thm}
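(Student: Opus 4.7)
The plan is to decouple the two linked denominators $(m+n)^{s_3}$ and $(m+2n)^{s_4}$ by partial fractions, then reduce each resulting building block to alternating Euler sums of depth at most two. Since $(m+2n)-(m+n)=n$, a standard partial fraction in $m+n$ with shift $n$ expands
\[
\frac{1}{(m+n)^{s_3}(m+2n)^{s_4}}=\sum_{i=1}^{s_3}\frac{c_i}{n^{s_3+s_4-i}(m+n)^i}+\sum_{j=1}^{s_4}\frac{d_j}{n^{s_3+s_4-j}(m+2n)^j}
\]
with explicit integer coefficients $c_i,d_j$. Summing over $m,n\ge 1$ rewrites $\gz_\so(s_1,\ldots,s_4)$ as a $\Q$-linear combination of
\[
T_1(a,b,c):=\sum_{m,n\ge 1}\frac{1}{m^a n^b(m+n)^c}\qquad\text{and}\qquad T_2(a,b,c):=\sum_{m,n\ge 1}\frac{1}{m^a n^b(m+2n)^c},
\]
each of total weight $w$. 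The boundary sub-cases $s_3=0$ or $s_4=0$ are degenerate (no decoupling needed) and land directly in $T_2$ or $T_1$ respectively.

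The function $T_1$ is the classical Mordell--Tornheim double zeta, which by a second partial fraction (now using $(m+n)-n=m$) is known to be a $\Q$-linear combination of double MZVs; these are, in particular, alternating Euler sums of depth at most two. This handles the first family without further work, and is the content of \S\ref{sec:MT}.

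For $T_2$ the strategy is analogous, but introduces alternating signs. I would first apply partial fractions to $\frac{1}{m^a(m+2n)^c}$ with shift $2n$: the ``$m$''-side contributes only products of single zeta values, while the ``$(m+2n)$''-side reduces the problem to sums of the shape
\[
U(b,j):=\sum_{m,n\ge 1}\frac{1}{n^b(m+2n)^j}=\sum_{n\ge 1}\frac{1}{n^b}\Bigl(\gz(j)-\sum_{k=1}^{2n}\frac{1}{k^j}\Bigr).
\]
Splitting the finite inner sum by the parity of $k$ (via the indicator $\tfrac12(1\pm(-1)^k)$ together with the substitutions $k=2k'$ and $k=2k'-1$) expresses $U(b,j)$ as a $\Q$-linear combination of $\gz(b)\gz(j)$, $\gz(b+j)$, and depth-two alternating Euler sums of type $\gz(b,j;\pm 1,\pm 1)$. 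The character $(-1)^k$ produced at this step is precisely why the conclusion of the theorem must involve alternating, and not merely classical, Euler sums.

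The exceptional cases $s_1=s_2=s_3=0$ and $s_1=s_2=s_4=0$ correspond to Step~1 collapsing entirely: $\gz_\so$ becomes $\sum_{m,n\ge 1}(m+2n)^{-w}$ or $\sum_{m,n\ge 1}(m+n)^{-w}$, and counting lattice points $(m,n)\in\N^2$ with $m+2n=k$ (resp.\ $m+n=k$) gives $\lfloor(k-1)/2\rfloor$ (resp.\ $k-1$), yielding $\gz(w-1)$ up to $\gz(w)$-corrections. This explains the single exception in the statement. The main technical obstacle is the parity-splitting argument for $T_2$: after two partial fractions one must verify that the remaining residue is genuinely of depth $\leq 2$, with no latent depth-three contribution surviving, and this requires disciplined bookkeeping of the binomial coefficients $c_i,d_j$, the shift indices, and the signs that emerge after each substitution.
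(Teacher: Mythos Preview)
Your overall plan---iterated partial fractions together with the parity trick $\tfrac12(1+(-1)^n)$ to pass from $m+2n$ to $m+n$---is exactly the engine that drives the paper's proof, and the identification of the two exceptional cases is correct. However, there is a genuine gap: you treat the pieces produced by each partial-fraction step as individually convergent, and in general they are not.

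Concretely, your first decomposition produces $T_1(s_1,s_2{+}s_3{+}s_4{-}i,i)$ and $T_2(s_1,s_2{+}s_3{+}s_4{-}j,j)$ for $1\le i\le s_3$, $1\le j\le s_4$. By the Mordell--Tornheim criterion, $T_1(s_1,b,1)=\sum_{m,n}m^{-s_1}n^{-b}(m+n)^{-1}$ diverges whenever $s_1=0$; the same holds for $T_2(0,b,1)$. So when $s_1=0$ and $s_3,s_4>0$ you have written a convergent quantity as a difference of two divergent ones, and the subsequent steps are not justified. The same problem recurs in your treatment of $T_2(a,b,c)$: separating $m^{-a}(m+2n)^{-c}$ ``with shift $2n$'' necessarily produces an $m$-side term with $i=1$, and the resulting ``product of single zeta values'' is $\gz(1)\cdot\gz(\cdot)$, which is infinite. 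Your formula for $U(b,j)$ likewise presupposes $j\ge 2$ so that $\gz(j)$ is finite; it says nothing about $U(b,1)$.

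The paper confronts exactly this difficulty and organises the case analysis so that the only place it cannot be avoided is when $s_1=s_2=0$ with $s_3,s_4>0$. There (Case~(iii.c.2)) the two boundary terms with exponent $1$ on $m+n$ and on $m+2n$ are \emph{not} separated; they are kept together as
\[
\sum_{m,n\ge 1}\frac{1}{n^{s}}\Bigl(\frac{1}{m+2n}-\frac{1}{m+n}\Bigr),
\]
and this combination is shown to converge by passing to the partial sum $S^{(N)}$, telescoping the inner sum in $m$, and controlling the tail by $\sum_{n\le N}n^{1-s}/N\ll (\log N)/N$. Only after this cancellation is established does the parity substitution yield depth-two alternating Euler sums. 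Your proposal is missing precisely this step: the ``disciplined bookkeeping'' you anticipate is not about stray depth-three terms, but about pairing the two divergent boundary contributions and proving that their difference is a well-defined weight-$w$ object before any further manipulation.
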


As a final remark we point out that
the alternating Euler sums or more generally,
special values of multiple polylogarithms at roots of unity \cite{Zocta},
can be used to study many other types of multiple zeta functions
such as those appearing in the recent work of Komori, Matsumoto and Tsumura
\cite{KMT1,KMT2,KMT3}. This will be carried out in detail in another work.

The author would like to thank Max-Planck-Institut
f\"ur Mathematik for providing financial support
during his sabbatical leave when this work was done.

\section{Some preliminaries}
\subsection{Alternating Euler sum}\label{sec:Esum}
For positive integers $s_1,\dots,s_d\in$ we define
the \emph{alternating Euler sum} by
\begin{equation}\label{equ:z}
\zeta(s_1,\dots,s_d;x_1,\dots,x_d)
   := \sum_{m_1>\cdots>m_d\ge 1}\;
   \frac{x_1^{m_1}\cdots x_d^{m_d}}{m_1^{s_1}\cdots m_d^{s_d}}, \quad  (s_1,x_1)\ne (1,1),
\end{equation}
where $x_j=\pm 1$ for all $1\le j\le l$. We call
$s_1+\dots+s_d$ the \emph{weight} and $d$ the \emph{depth}.
To save space, if $x_j=-1$ then
$\overline s_j$ will be used. For example, we have
$\zeta(\overline1)=\zeta(1;-1)=-\ln 2$ and the striking
identity \cite{Zesum} that for every positive integer $n$
\begin{equation*}
 \zeta(\{3\}^n)=8^n\zeta(\{\bar{2},1\}^n),
\end{equation*}
where $\{S\}^n$ means the string $S$ repeats $n$ times.
Identities like these which are derived by (regularized) double
shuffle relations
will be crucial to simplify our computations in the last section.

\subsection{Mordell-Tornheim zeta functions}\label{sec:MT}
They are defined by (see \cite{Mord,Torn})
\begin{equation}\label{equ:MT}
\gz_\MT(s_1,\dots,s_d;s)=\sum_{m_1,\dots,m_d=1}^\infty
 m_1^{-s_1} m_2^{-s_2}\cdots  m_d^{-s_d}(m_1+\cdots +m_d)^{-s}.
\end{equation}
Recently Zhou and Bradley have shown \cite[Thm.~4]{ZB}
that \eqref{equ:MT} converges absolutely
if  $\Re(s)+\sum_{j=1}^\ell \Re(s_{i_j})>\ell$
for each nonempty subset $\{i_1,\dots,i_\ell\}$
of $\{1,2,\dots,d\}$.
We can use integral test and the  well-known formula
$$\sum_{m=1}^n m^t=\frac{1}{t+1}\Big(B_{t+1} (n+1) -B_{t+1}(0)\Big),$$
where $B_{t+1}(x)$ is the Bernoulli polynomial,
to extend their proof to the following
necessary and sufficient conditions for convergence
when all arguments are integers.
\begin{prop}\label{prop:convMT}
Let $s_1,\dots,s_d$ and $s$ be arbitrary integers. Then
the Mordell-Tornheim zeta function $\gz_\MT(s_1,\dots,s_d;s)$
converges if and only if
\begin{equation*}
s+\sum_{j=1}^\ell s_{i_j}>\ell
\end{equation*}
for each nonempty subset $\{i_1,\dots,i_\ell\}$
of $\{1,2,\dots,d\}$.
\end{prop}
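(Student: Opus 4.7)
\medskip
\noindent\textbf{Proof proposal.}
The ``if'' direction is immediate from the Zhou--Bradley theorem recalled just above: it gives \emph{absolute} convergence under exactly the stated inequalities, and since all arguments are integers every summand is positive, so absolute convergence coincides with convergence.

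For the ``only if'' direction the plan is to reduce everything to a single claim: \emph{whenever $s+s_1+\cdots+s_d\le d$, the series $\gz_\MT(s_1,\dots,s_d;s)$ diverges}. Granting this, the general statement follows easily. Given a nonempty \emph{proper} subset $I=\{i_1,\dots,i_\ell\}\subsetneq\{1,\dots,d\}$, positivity of the summands lets me fix $m_k=1$ for every $k\notin I$ to obtain a convergent subseries
\begin{equation*}
\sum_{m_{i_1},\dots,m_{i_\ell}\ge 1}\prod_{j=1}^\ell m_{i_j}^{-s_{i_j}}\bigl(C+m_{i_1}+\cdots+m_{i_\ell}\bigr)^{-s},\qquad C:=d-\ell.
\end{equation*}
The elementary bound $M\le C+M\le(C+1)M$ for $M\ge C$ gives the two-sided estimate $(C+M)^{-s}\asymp M^{-s}$, so this shifted subseries converges iff $\gz_\MT(s_{i_1},\dots,s_{i_\ell};s)$ converges; applying the main claim to the latter $\ell$-variable series then forces $s+\sum_{j\in I}s_{i_j}>\ell$. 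The remaining case $I=\{1,\dots,d\}$ is the claim itself.

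To prove the claim, suppose $s+\sum_i s_i\le d$, and restrict the summation to the disjoint dyadic boxes
\begin{equation*}
B_k:=\bigl\{(m_1,\dots,m_d)\in\Z^d: 2^k\le m_i<2^{k+1}\text{ for all } i\bigr\},\qquad k\ge 0.
\end{equation*}
On $B_k$ each $m_i^{-s_i}$ is bounded below by a positive constant times $2^{-ks_i}$ (trivially when $s_i<0$, with constant $2^{-s_i}$ when $s_i\ge 0$), and similarly $(m_1+\cdots+m_d)^{-s}\ge c\,2^{-ks}$. Since $|B_k|\ge 2^{kd}$, the contribution of $B_k$ is at least a positive constant times $2^{k(d-s-\sum_i s_i)}$; the exponent is $\ge 0$ by hypothesis, so summing over $k$ diverges.

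The main technical points are the elementary comparison $(C+M)^{-s}\asymp M^{-s}$ and the sign-by-sign bookkeeping in the dyadic lower bound, both routine. In particular, this plan sidesteps the iterated Bernoulli polynomial summation alluded to before the proposition in favor of a single dyadic diagonal estimate.
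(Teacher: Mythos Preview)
Your argument is correct. The ``if'' direction is indeed immediate from Zhou--Bradley, and for the ``only if'' direction your two-step reduction (freeze the variables outside $I$ to $1$, then compare $(C+M)^{-s}\asymp M^{-s}$) together with the dyadic diagonal lower bound works cleanly. One cosmetic point: your inequality $M\le C+M\le (C{+}1)M$ actually holds for every $M\ge 1$, not only $M\ge C$, so there is no need to worry about terms with small $M$; as written this looks like you might be discarding infinitely many terms when $2\ell<d$.

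Your route differs from the paper's. The paper does not give a detailed proof but indicates that one should extend Zhou--Bradley's argument by summing out one variable at a time via Faulhaber's formula $\sum_{m=1}^n m^t=\frac{1}{t+1}\bigl(B_{t+1}(n+1)-B_{t+1}(0)\bigr)$ and an integral test, tracking the asymptotics of the partial sums. Your dyadic-box estimate bypasses that iterated summation entirely: it is shorter, more elementary (no Bernoulli polynomials, no induction on $d$), and handles all sign patterns of the $s_i$ uniformly. The paper's sketch, on the other hand, fits more naturally into the analytic framework of Zhou--Bradley and could in principle yield finer asymptotics at the boundary, though none are needed here.
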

The main result of \cite{ZB} is the following
\begin{prop} \label{prop:ZBred} \emph{(\cite[Thm.~5]{ZB})}
Let $s_1,\dots,s_d$ and $s$ be nonnegative integers. If at most
one of them is equal to $0$ then the Mordell-Tornheim zeta value
$\gz_\MT(s_1,\dots,s_d;s)$ can be expressed as a $\Q$-linear
combination of MZVs of the same weight and depth.
\end{prop}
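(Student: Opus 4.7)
The plan is to realize $\gz_\MT(s_1,\dots,s_d;s)$ as a Chen iterated integral from $0$ to $1$ and then apply the shuffle product to write it as a $\Z$-linear combination of admissible iterated integrals, each of which is a single MZV of the prescribed weight and depth. Starting from $\frac{1}{N^s} = \frac{1}{\Gamma(s)}\int_0^1 (-\log u)^{s-1}u^{N-1}\,du$ with $N=m_1+\cdots+m_d$ and interchanging sum and integral yields, for $s\ge 1$ and all $s_i\ge 1$,
\begin{equation*}
\gz_\MT(\vs;s) \;=\; \frac{1}{\Gamma(s)}\int_0^1 (-\log u)^{s-1}\prod_{i=1}^d \Li_{s_i}(u)\,\frac{du}{u}.
\end{equation*}
Writing $\omega_0 := du/u$ and $\omega_1 := du/(1-u)$, the identity $\int_0^1\omega_0^s g = \frac{1}{(s-1)!}\int_0^1(-\log u)^{s-1}g(u)\,du/u$ (verified inductively by Fubini on the simplex $0<u_s<\cdots<u_1<1$) rewrites the right-hand side as the Chen iterated integral $\int_0^1\omega_0^s\prod_i\Li_{s_i}(u)$.

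Next, each factor has the classical representation $\Li_{s_i}(u) = \int_0^u\omega_0^{s_i-1}\omega_1$, so by Chen's shuffle-product formula for iterated integrals over a common path,
\begin{equation*}
\prod_{i=1}^d \Li_{s_i}(u) \;=\; \sum_{w} c_w\int_0^u w,
\end{equation*}
where $w$ runs over the words in $\{\omega_0,\omega_1\}$ appearing in the shuffle of the factors $W_i := \omega_0^{s_i-1}\omega_1$, the multiplicities $c_w$ lie in $\Z_{\ge 0}$, and each $w$ has length $\sum s_i$ with exactly $d$ occurrences of $\omega_1$. Concatenating with the outer block $\omega_0^s$ gives
\begin{equation*}
\gz_\MT(\vs;s) \;=\; \sum_w c_w\int_0^1 \omega_0^s w,
\end{equation*}
a $\Z$-linear combination of iterated integrals of length $s+\sum s_i$ containing $d$ letters $\omega_1$. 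Because $s\ge 1$, every word $\omega_0^s w$ begins with $\omega_0$, so each $\int_0^1\omega_0^s w$ is an admissible MZV of weight $s+\sum s_i$ and depth $d$, proving the claim in the principal case.

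The two boundary cases admit the same treatment. If $s=0$ then Proposition~\ref{prop:convMT} forces $s_i\ge 2$ for all $i$, and $\gz_\MT(\vs;0)=\prod_i\gz(s_i)$; the shuffle of the representations $\gz(s_i)=\int_0^1\omega_0^{s_i-1}\omega_1$ produces the desired reduction directly. If exactly one $s_i$ vanishes, say $s_1=0$, then $\Li_0(u) = u/(1-u)$ and the key identity $\Li_0(u)\cdot du/u = du/(1-u) = \omega_1$ converts the outer block into $\omega_0^{s-1}\omega_1$, yielding
\begin{equation*}
\gz_\MT(0,s_2,\dots,s_d;s) \;=\; \int_0^1 \omega_0^{s-1}\omega_1\prod_{i\ge 2}\Li_{s_i}(u) \;=\; \sum_w c_w\int_0^1 \omega_0^{s-1}\omega_1 w;
\end{equation*}
admissibility still holds since $s\ge 2$ is forced by Proposition~\ref{prop:convMT}, and the total $\omega_1$-count remains $d$.

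The only real obstacle is bookkeeping: one must verify that every word produced by the shuffle begins with $\omega_0$ (so that no shuffle-regularization is needed) and that the total number of $\omega_1$'s equals $d$ in every term. Both points follow automatically from the structural form of the factors $W_i$ (or $\omega_1$ in the degenerate case) and the convergence inequalities supplied by Proposition~\ref{prop:convMT}, so no additional input is required once the integral formulation is in place.
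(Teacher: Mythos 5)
Your argument is correct, but note that the paper itself offers no proof of this proposition: it is imported verbatim from Zhou and Bradley \cite[Thm.~5]{ZB}, whose own proof proceeds by induction on the depth $d$ using the partial-fraction identity reproduced in the paper as Lemma~\ref{lem:combLem}, absorbing one summation variable at a time --- the same elementary mechanism the paper then reuses to reduce $\gz_\so$. Your route is genuinely different: you pass to the integral representation $N^{-s}=\frac{1}{\Gamma(s)}\int_0^1(-\log u)^{s-1}u^{N-1}\,du$, recognize the integrand as a product of polylogarithms, and invoke the shuffle product for iterated integrals in $\omega_0=du/u$ and $\omega_1=du/(1-u)$. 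This buys explicit nonnegative integer coefficients and makes the weight count (word length $s+\sum s_i$) and the depth count (number of letters $\omega_1$, which is $d$) completely transparent, whereas the partial-fraction induction is more elementary and serves as the template for the rest of the paper. Two small points you should make explicit: the hypothesis that at most one argument vanishes does not by itself imply convergence (e.g.\ $\gz_\MT(1,1;0)$ diverges), so the inequalities of Proposition~\ref{prop:convMT} must be assumed throughout, as you implicitly do; and when several $s_i=1$ the intermediate functions $\int_0^u w$ blow up logarithmically as $u\to 1$, so one should state that the shuffle identity is applied for $u<1$ and that the assembled words $\omega_0^s w$ (resp.\ $\omega_0^{s-1}\omega_1 w$) are admissible because they begin with $\omega_0$ and end with $\omega_1$, which is what guarantees convergence of the final iterated integrals. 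Neither point is a gap, only bookkeeping.
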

In this note we will only need this proposition when the depth is two.

\subsection{Convergence domain of $\gz_\so(s_1,\dots,s_4)$}
In the following proposition we only consider
integer arguments although it is not hard to extend
it to the complex variable situation. The result can be
derived from the concrete singularity set given in \cite{KMT2}
but the following proof is more straight-forward.
\begin{prop}
\label{prop:conv}
Let $s_1,\dots,s_4$ be nonnegative integers. Then
$$\gz_\so(s_1,\dots,s_4)=\sum_{m,n=1}^\infty
\frac{1}{m^{s_1}n^{s_2}(m+n)^{s_3}(m+2n)^{s_4}}$$
converges if and only if
\begin{equation}\label{equ:domainOfConv}
s_1+s_3+s_4>1,
s_2+s_3+s_4>1, \text{ and }s_1+s_2+s_3+s_4>2.
\end{equation}
\end{prop}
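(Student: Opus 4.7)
The plan is to prove necessity and sufficiency separately by elementary estimation of the positive-term double series. One could invoke the two-dimensional integral test, since the summand is monotone nonincreasing in each of $m$ and $n$, but the pair-of-series approach below is completely self-contained and avoids appealing to a convergence theorem for improper double integrals.

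For sufficiency, I split $\N^2$ into the two regions $R_1 = \{m \geq n \geq 1\}$ and $R_2 = \{n > m \geq 1\}$. On $R_1$, the elementary bounds $m+n \geq m$ and $m+2n \geq m$ give
\[ \frac{1}{m^{s_1}n^{s_2}(m+n)^{s_3}(m+2n)^{s_4}} \leq \frac{1}{m^{s_1+s_3+s_4}\, n^{s_2}}. \]
Summing first over $m \geq n$ requires $s_1+s_3+s_4 > 1$ and produces an upper bound of order $n^{1-(s_1+s_3+s_4)}$, after which the outer sum is dominated by $\sum_n n^{1-w}$, which converges exactly when $w > 2$. The region $R_2$ is handled symmetrically using $m+n \geq n$ and $m+2n \geq n$, and needs only $s_2+s_3+s_4 > 1$ together with $w > 2$.

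For necessity, if $s_1+s_3+s_4 \leq 1$ then restricting to $n = 1$ the full sum dominates a series comparable to $\sum_m m^{-(s_1+s_3+s_4)}$, which diverges; setting $m = 1$ handles $s_2+s_3+s_4 \leq 1$ in the same way. It remains to rule out $w \leq 2$ when both inequalities $s_1+s_3+s_4 > 1$ and $s_2+s_3+s_4 > 1$ hold. Since the arguments are nonnegative integers, each of these sums is then at least $2$; adding the two inequalities gives $w + s_3 + s_4 \geq 4$, so $w \leq 2$ forces $s_1 = s_2 = 0$ and $s_3+s_4 = 2$, leaving only the three cases $(s_3,s_4) \in \{(2,0),(1,1),(0,2)\}$.

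These residual cases are dispatched directly. For $(2,0)$ and $(0,2)$, grouping lattice points by $k = m+n$ (respectively $k = m+2n$) gives $\Theta(k)$ pairs each contributing $1/k^2$, yielding a divergent harmonic lower bound $\sum_k 1/k$. For $(1,1)$ the partial fraction identity
\[ \frac{1}{(m+n)(m+2n)} = \frac{1}{n}\left(\frac{1}{m+n} - \frac{1}{m+2n}\right) \]
telescopes in $m$ to $n^{-1}\sum_{k=n+1}^{2n} k^{-1} \sim (\log 2)/n$, and $\sum_n 1/n$ diverges. No single step is delicate; the only mild obstacle is enumerating the low-weight cases, and isolating them as above keeps the bookkeeping short.
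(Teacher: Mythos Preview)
Your argument is correct, and it takes a genuinely different route from the paper's proof. The paper observes that $m+n<m+2n<2(m+n)$ and hence sandwiches $\gz_\so(s_1,\dots,s_4)$ between $2^{-s_4}\gz_\MT(s_1,s_2;s_3+s_4)$ and $\gz_\MT(s_1,s_2;s_3+s_4)$, after which the result follows in one line from the known convergence criterion for the Mordell--Tornheim double zeta function (Prop.~\ref{prop:convMT}). Your proof instead works from scratch: the diagonal split $\{m\ge n\}\cup\{n>m\}$ for sufficiency, single-row restrictions $n=1$ and $m=1$ for two of the necessary inequalities, and an explicit enumeration of the three residual low-weight cases $(s_3,s_4)\in\{(2,0),(1,1),(0,2)\}$ for the third. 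The paper's approach is much shorter because it outsources all the work to Prop.~\ref{prop:convMT}, but that proposition is only stated (with a hint toward Bernoulli polynomials and the integral test) and not proved in the paper; your version is fully self-contained and makes no appeal to the Mordell--Tornheim theory at all. Either argument is fine for the purpose; yours would be preferable in a context where one does not already have Prop.~\ref{prop:convMT} on hand.
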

\begin{proof}
First we observe that for all $m,n>0$
$$m+n < m+2n< 2(m+n).$$
Hence
\begin{multline*}
  \frac1{2^{s_4}}\gz_\MT(s_1,s_2;s_3+s_4)=\sum_{m,n=1}^\infty
 \frac{1}{m^{s_1}n^{s_2}(m+n)^{s_3}(2m+2n)^{s_4}}\\
 \le \sum_{m,n=1}^\infty
\frac{1}{m^{s_1}n^{s_2}(m+n)^{s_3}(m+2n)^{s_4}}
\le \sum_{m,n=1}^\infty
 \frac{1}{m^{s_1}n^{s_2}(m+n)^{s_3+s_4}}=\gz_\MT(s_1,s_2;s_3+s_4).
\end{multline*}
The proposition now follows immediately from the
the convergence criterion of the Mordell-Tornheim
double zeta function in Prop.~\ref{prop:convMT}.
\end{proof}

\subsection{A combinatorial lemma}
The following lemma will be used heavily throughout the
proof of Theorem~\ref{thm:main}.
\begin{lem} \label{lem:combLem}\emph{ (\cite[Lemma 1]{ZB})}
Let $r$ and $n_1,\dots,n_r$ be positive integers, and let
$x_1,\dots,x_r$ be non-zero real number such that $x_1+\dots+x_r\ne 0$.
Then
$$\prod_{j=1}^r \frac{1}{x_j^{n_j}} =
\sum_{j=1}^r\Bigg(\prod_{\substack{k=1\\ k\ne j}}^r
\sum_{a_k=0}^{n_k-1}\Bigg) \frac{M_j}{x^{n_j+A_j}}
    \prod_{\substack{k=1\\ k\ne j}}^r \frac{1}{x_k^{n_k-a_k}},$$
where the multi-nomial coefficient
$$M_j=\frac{(n_j+A_j-1)!}{(n_j-1)!} \prod_{\substack{k=1\\ k\ne j}}^r \frac{1}{a_k!} \qquad \text{and}\qquad A_j=\sum_{\substack{k=1\\ k\ne j}}^r a_k.$$
The notation $\displaystyle \prod_{\substack{k=1\\ k\ne j}}^r
\sum_{a_k=0}^{n_k-1}$ means the multiple sum
$\displaystyle \sum_{a_1=0}^{n_1-1}\dots\sum_{a_{j-1}=0}^{n_{j-1}-1}
\sum_{a_{j+1}=0}^{n_{j+1}-1}\dots\sum_{a_r=0}^{n_r-1}.$
\end{lem}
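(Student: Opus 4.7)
Write $x := x_1+\cdots+x_r$, which is nonzero by hypothesis. The plan is to establish and then iteratively unfold the elementary recursion
\begin{equation*}
\prod_{j=1}^r \frac{1}{x_j^{n_j}} \;=\; \frac{1}{x}\sum_{k=1}^r \frac{1}{x_k^{n_k-1}\prod_{j\ne k}x_j^{n_j}},
\end{equation*}
which is valid whenever every $n_k \ge 1$ and follows immediately from multiplying the LHS by $(\sum_k x_k)/x = 1$ and distributing.

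Each application of this identity extracts a factor $1/x$ and drops the exponent of one $x_k$ by one. I would iterate, applying the recursion to every summand so long as all remaining exponents are strictly positive, halting a branch at exactly the step that first drives some exponent to zero. Because decrements happen one coordinate at a time, exactly one coordinate reaches zero at the halting step; call its index $j$. Setting $a_k$ equal to the number of times $x_k$ was chosen during the reductions for $k \ne j$ (so the remaining exponent of $x_k$ is $n_k - a_k \ge 1$), every terminal term takes precisely the shape
\begin{equation*}
\frac{1}{x^{n_j+A_j}} \prod_{k\ne j} \frac{1}{x_k^{n_k-a_k}}, \qquad A_j := \sum_{k\ne j} a_k, \qquad 0 \le a_k \le n_k - 1,
\end{equation*}
matching the form of a summand on the RHS of the lemma.

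It then suffices to count, for each terminal state $(j;(a_k)_{k\ne j})$, the number of reduction paths reaching it. Any such path is an ordered sequence of $n_j + A_j$ single-coordinate decrements, $n_j$ on coordinate $j$ and $a_k$ on coordinate $k \ne j$. The final decrement is forced to be on coordinate $j$, since that is what first drives an exponent to $0$; before this step no coordinate has yet reached $0$ (coordinate $j$ has been hit $n_j - 1 < n_j$ times and each $k \ne j$ has been hit $a_k \le n_k - 1$ times). Hence the number of paths equals the number of orderings of the first $n_j + A_j - 1$ steps, namely the multinomial coefficient $(n_j+A_j-1)!/\bigl((n_j-1)!\prod_{k\ne j} a_k!\bigr) = M_j$. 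Summing over all terminal states $(j,(a_k))$ reproduces the RHS of the lemma.

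The chief delicate point is the stopping rule: one must verify that the ``first coordinate to reach zero'' classification of terminal states is unambiguous, so that every reduction path contributes to exactly one RHS summand with no double counting. This is immediate from the one-coordinate-at-a-time nature of the recursion. The hypothesis $x \ne 0$ is used precisely to divide by $x$ at each step, while the nonvanishing of each $x_j$ guarantees that all reciprocals appearing in the intermediate expressions are defined.
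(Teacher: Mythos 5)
Your argument is correct. Note first that the paper itself offers no proof of this lemma: it is imported verbatim as Lemma~1 of the Zhou--Bradley preprint \cite{ZB}, so there is no in-paper proof to compare against. Your proof is a valid, self-contained combinatorial one. The recursion $\prod_j x_j^{-n_j}=x^{-1}\sum_k x_k^{-(n_k-1)}\prod_{j\ne k}x_j^{-n_j}$ is an exact identity whenever all exponents are positive; the total exponent drops by one at each application, so the expansion terminates; the one-at-a-time decrements make the ``first coordinate to reach zero'' labelling of leaves unambiguous; and the path count for a terminal state $(j,(a_k)_{k\ne j})$ is exactly the multinomial coefficient $M_j$, since the last step must decrement coordinate $j$ and every ordering of the preceding $n_j+A_j-1$ steps keeps all exponents positive (the counts of decrements only increase along a path, so checking positivity at step $n_j+A_j-1$ suffices). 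A quick sanity check at $r=2$, $(n_1,n_2)=(2,1)$ confirms the bookkeeping. The standard route (used in \cite{ZB} and going back to the classical $r=2$ partial-fraction decomposition of Tornheim and Mordell) is an induction on $n_1+\dots+n_r$ using precisely your recursion as the inductive step; your path-counting argument unwinds that induction into a single closed-form count, which is arguably more illuminating since it explains where the multinomial coefficient $M_j$ comes from. Two trivial remarks: the symbol $x$ in the statement is indeed $x_1+\dots+x_r$ (undefined in the quoted statement, correctly inferred by you), and the argument nowhere uses that the $x_j$ are real rather than complex.
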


\section{Proof of Theorem \ref{thm:main}}
We now use a series of reductions to prove the theorem.

\medskip
\noindent
\underline{Case (i)}.
If $s_4=0$ then we just get a Mordell-Tornheim double zeta
value so the theorem is mostly handled by Prop.~\ref{prop:ZBred}
except for the case $s_1=s_2=0$. Then assuming $s\ge 3$ we have
\begin{equation}\label{equ:zetas0}
\gz_\so(0,0,s,0)=\gz(s,0)=\sum_{m>n\ge 1} \frac{1}{m^{s}}=
\sum_{m=1}^{\infty} \frac{m-1}{m^{s}}=\gz(s-1)-\gz(s).
\end{equation}
Thus Theorem \ref{thm:main} is true in this case.
This is the first one of the two exceptional cases in which we
need the Riemann zeta value with the weight lowered by one.

We now assume $s_4>0$ in the rest of the proof.
If $s_3=0$ and $s_2>0$ (resp.\ $s_2=0$ and $s_1>0$, resp. $s_1=s_2=0$)
then one can go directly to Case (iii.a) (resp.\ Case (iii.b),
resp.\ Case (iii.c)) below. Otherwise we must
have $s_2,s_3,s_4>0$ which is Case (ii) next.

\medskip
\noindent
\underline{Case (ii)}. Assume $s_2,s_3,s_4>0$.
In Lemma~\ref{lem:combLem}
taking  $x_1=n$ and $x_2=m+n$  we get:
\begin{align}
 \gz_\so(s_1,\dots,s_4)
=&\sum_{a_2=0}^{s_2-1}
{s_3+a_2-1\choose a_2}
\gz_\so(s_1,s_2-a_2,0,s_4+s_3+a_2)\label{case2line1}\\
+&\sum_{a_3=0}^{s_3-1}
{s_2+a_3-1\choose a_3}
\gz_\so(s_1,0,s_3-a_3,s_4+s_2+a_3).\label{case2line2}
\end{align}
We recommend the interested reader to check the convergence
of the above values by \eqref{equ:domainOfConv}.
The rule of thumb is as follows: if we apply Lemma~\ref{lem:combLem}
with each $x_j$ a positive combination of indices then
the convergence is automatically guaranteed.
In each of the following steps we often omit this
convergence checking since it is straight-forward
in most cases. The only exception is \eqref{equ:limit1}
which in fact poses the most difficulty.

Note that the weight is kept unchanged in the above so
we are led to the following three cases:
\begin{align*}
    \text{(iii.a)}.&  \ s_3=0,s_2, s_4>0  \text{ from \eqref{case2line1} since }s_2-a_2>0,\\
    \text{(iii.b)}.&  \ s_2=0,s_1, s_4>0 \text{ from \eqref{case2line2} if we started with $s_1>0$},\\
    \text{(iii.c)}.&  \ s_1=s_2=0, s_4>0 \text{ from \eqref{case2line2} if we started with $s_1=0$}.
\end{align*}

\medskip
\noindent
\underline{Case (iii.a)}. Suppose $s_3=0$, $s_2>0$ and $s_4>0$.
With $x_1=m$, $x_2=m+2n$ Lemma~\ref{lem:combLem} yields
\begin{equation}\label{equ:s2=0}
  \gz_\so(s_1,s_2,0,s_4)
=  2^{s_2}  \sum_{m,n=1}^\infty
\frac{1}{m^{s_1}(2n)^{s_2}(m+2n)^{s_4}}
=   2^{s_2-1} \sum_{m,n=1}^\infty
\frac{1+(-1)^n}{m^{s_1}n^{s_2}(m+n)^{s_4}}.
\end{equation}
Breaking this into two parts and applying the Lemma with
$x_1=m$, $x_2=n$ to the second part we have
\begin{align*}
\eqref{equ:s2=0}= &2^{s_2-1}\Bigg\{\gz_\MT(s_1,s_2;s_4)
+   \sum_{a_1=0}^{s_1-1} {s_2+a_1-1\choose a_1}
\sum_{m,n=1}^\infty \frac{ (-1)^n}{m^{s_1-a_1}(m+n)^{s_4+s_2+a_1}} \\
&\phantom{2^{s_2-1} \gz_\MT(s_1,s_2;s_4)}
+\sum_{a_2=0}^{s_2-1} {s_1+a_2-1\choose a_2}
\sum_{m,n=1}^\infty \frac{ (-1)^n}{n^{s_2-a_2}(m+n)^{s_4+s_1+a_2}}\Bigg\} \\
= &2^{s_2-1}\Bigg\{ \gz_\MT(s_1,s_2;s_4)
+   \sum_{a_1=0}^{s_1-1} {s_2+a_1-1\choose a_1}
\gz(\ol{s_4+s_2+a_1}, \ol{s_1-a_1})\\
&\phantom{2^{s_2-1} \gz_\MT(s_1,s_2;s_4)}
+\sum_{a_2=0}^{s_2-1} {s_1+a_2-1\choose a_2}
 \gz(s_4+s_1+a_2,\ol{s_2-a_2})\Bigg\} .
\end{align*}
Observe that the last component of every alternating Euler sum
(or double zeta value) in the above sums is positive and
its weight is unchanged. So Theorem~\ref{thm:main} holds in
this case.

\medskip
\noindent
\underline{Case (iii.b)}. Suppose $s_2=0,$ $s_1>0$ and $s_4>0$. Applying
Lemma \ref{lem:combLem} with $x_1=m$, $x_2=m+2n$ to
$$ \gz_\so(s_1,0,s_3,s_4)
=   \sum_{m,n=1}^\infty
\frac{1}{m^{s_1}(m+n)^{s_3}(m+2n)^{s_4}},
$$
we get
\begin{align}
 \gz_\so(s_1,0,s_3,s_4)
=&\sum_{a_1=0}^{s_1-1} {s_4+a_1-1\choose a_1}
 \frac1{2^{s_4+a_1}} \gz(s_3+s_4+a_1,s_1-a_1) \label{equ:case3.1line1}\\
+&\sum_{a_4=0}^{s_4-1} {s_1+a_4-1\choose a_4}
         \frac1{2^{s_1+a_4}} \gz_\so(0,0,s_3+s_1+a_4,s_4-a_4).\label{equ:s3s4left}
\end{align}
Note that all the double zeta values in \eqref{equ:case3.1line1} have the
same weight as the one we start with. We remind the reader that
to determine the weight of a MZV it's
not enough just to add all the components to see that weight
does not change. We also need to
check that every component is positive.
In particular the last component $s_1-a_1>0$ in \eqref{equ:case3.1line1}.
So we are reduced to the case (iii.c): $s_1=s_2=0$, and because of
the convergence restriction that the sum of all components
is at least 3 we may also assume that $s_3+s_4\ge 3$ holds in case (iii.c).
Further, since we assume $s_1>0$ and $s_4-a_4>0$ in \eqref{equ:s3s4left}
we are in fact reduced to the subcase (iii.c.2) of (iii.c)
where we can assume $s_3, s_4>0$.

\medskip
\noindent
\underline{Case (iii.c)}. Suppose $s_1=s_2=0$ and $s_4>0$. We divide the case further
into two subcases: (iii.c.1) $s_1=s_2=s_3=0$ and $s_4>0$,
and (iii.c.2) $s_1=s_2=0,s_3,s_4>0$.

\medskip
\noindent
\underline{Case (iii.c.1)}. In this case setting $s_4=s\ge 3$ (by convergence restraint) we get
\begin{equation*}
  \gz_\so(0,0,0,s)=\sum_{m,n=1}^\infty\frac{1}{(m+2n)^{s}}
 =\frac12\sum_{m,n=1}^\infty \frac{1+(-1)^n}{(m+n)^{s}}
 = \frac12\Big\{\gz(s,0)+\sum_{k>m\ge 1} \frac{ (-1)^{m+k} }{k^{s}}\Big\}.
\end{equation*}
Now the sum over $m$ is 0 unless $k$ is even so by \eqref{equ:zetas0} we have
\begin{equation*}
\gz_\so(0,0,0,s)=\frac12\Big\{\gz(s-1)-\gz(s)+\sum_{2k \ge 1} \frac{-1}{(2k)^{s}}\Big\}
=\frac12\Big\{\gz(s-1)-\gz(s)-2^{-s}\gz(s)\Big\}.
\end{equation*}
Thus Theorem \ref{thm:main} holds in this case.
This is the second exceptional case when we
need the Riemann zeta value with the weight decreased by one.

\medskip
\noindent
\underline{Case (iii.c.2)}.
Suppose $s_3=r>0, s_4=t>0$ and $r+t\ge 3$ (by convergence restraint).
Taking $x_1=-m-n$, $x_2=m+2n$ in Lemma~\ref{lem:combLem}
we get
\begin{align}
&  \gz_\so(0,0,r,t)= \sum_{m,n=1}^\infty
\frac{(-1)^{r}}{(-m-n)^{r}(m+2n)^{t}} \notag\\
= & \sum_{a=0}^{r-2} {t+a-1\choose a}
\sum_{m,n=1}^\infty \frac{(-1)^{a}}{n^{t+a}(m+n)^{r-a}}
+ \sum_{a=0}^{t-2} {r+a-1\choose a}
\sum_{m,n=1}^\infty \frac{(-1)^{r}}{n^{r+a}(m+2n)^{t-a}}  \notag\\
&\hskip2cm+(-1)^{r} {t+r-2\choose r-1}\sum_{m,n=1}^\infty
\Bigg( \frac{1}{n^{r+t-1}(m+2n)}- \frac{1}{n^{r+t-1}(m+n)}\Bigg)\label{equ:limit1}
\end{align}
The inner infinite sum of the first sum is exactly $(-1)^{a}\gz(r-a,t+a)$ while
the second sum can be dealt with by the method
similar to \eqref{equ:s2=0}. For any positive integer $u,v$ with $v>1$
we have
$$\sum_{m,n=1}^\infty \frac{1}{n^u(m+2n)^v}
 =2^{u-1}\sum_{m,n=1}^\infty \frac{1+(-1)^n}{n^u(m+n)^v}
=2^{u-1}\Big\{\gz(v,u)+\gz(v,\bar{u})\Big\}.$$
Notice that all the alternating Euler sums have the
same weight (and the second components are all positive since $r,t>0$).
So now we need to consider the sum in \eqref{equ:limit1}.
Assume $s=r+t-1>1$ and let
\begin{align*}
 S^{(N)}:=\sum_{m,n=1}^N
\Bigg( \frac{1}{n^s (m+2n)}- \frac{1}{n^s(m+n)}\Bigg)
=& \sum_{n=1}^N
\frac{1}{n^s} \Bigg( \sum_{m=1+2n}^{N+2n}-\sum_{m=1+n}^{N+n} \Bigg) \frac{1}{m} \\
=&\sum_{n=1}^N
\frac{1}{n^s} \Bigg( \sum_{m=N+n+1}^{N+2n}+\sum_{m=1}^{n}-\sum_{m=1}^{2n} \Bigg) \frac{1}{m}.
\end{align*}
Noticing that $s>1$ and therefore
\begin{equation*}
 \sum_{n=1}^N
\frac{1}{n^s}  \sum_{m=N+n+1}^{N+2n} \frac{1}{m}
<\sum_{n=1}^N \frac{1}{n^{s-1} N}\ll \log N/N\to 0 \quad\text{ as } N\to \infty,
\end{equation*}
we quickly see that
\begin{align*}
 \lim_{N\to\infty}S^{(N)}
 =&\sum_{n\ge m\ge 1}  \frac{1}{n^sm } - \sum_{2n\ge m\ge 1} \frac{1}{n^sm } \\
 =&\gz(s+1)+\gz(s,1) - 2^{s-1}\sum_{n\ge m\ge 1} \frac{1+(-1)^n}{n^sm } \\
 =&(1 - 2^{s-1})\Big(\gz(s+1)+\gz(s,1)\Big) -2^{s-1}\Big(\gz(\bar{s},1)+\gz(\ol{s+1})\Big),
\end{align*}
where all the Euler sums have the same weight.
This concludes the proof of Theorem~\ref{thm:main}. \hfill $\Box$.

\begin{rem}\label{rem:parity}
Notice that Theorem~\ref{thm:main} does not imply Tsumura's result
concerning odd weight values since the parity relations do not hold
in general for alternating Euler sums. For example, we know
the $\Q$-linear space generated by the Riemann zeta values of
weight three has dimension one since $\gz(3)=\gz(2,1)$.
Broadhurst conjectures that the dimension $F_n$ of the space generated by
weight $n$ alternating Euler sums is a Fibonacci number:
$F_1=1$, $F_2=2$, $F_3=3$, $F_4=5$, and so on.
It is easy to verify \cite{Zbasis} that the weight three space
is spanned by $\gz(3),$ $\gz(\bar{1},2)$ and
$\gz(\bar{1},1,1)$. Also note that the depth one subspace is generated
by $\gz(3)$ since $\gz(\bar3)=-\frac34\gz(3)$. Therefore the
alternating Euler sum $\gz(\bar{1},2)$ can not be reduced to depth one
according to Broadhurst conjecture despite the parity
difference between its weight and its depth.
\end{rem}

\section{Some examples and a conjecture}
In this last section we present some numerical examples
and put forward a conjecture on the space generated by
the special values of $\gz_\so$. In \cite{Ts} Tsumura
provided some evaluations of $\gz_\so(s_1,\dots,s_4)$ when the weight
is odd. By our general approach we can now compute all
the convergent values and in particular
we are able to confirm all the odd weight values of $\gz_\so$
found in \cite{Tsu1}.
In practice one may first convert our formulas to computer programs
and then compute with Maple. As a safeguard we have checked numerically
all the equations in this section with EZface \cite{EZface}.
In what follows we will only consider the regular cases, i.e., not
the two exceptional cases in each weight.

We first list all the regular weight three values below:
$$\begin{array}{rl}
\ &\gz_\so(1,0,0,2)=\frac32\gz(\bar1,2)+\frac1{16}\gz(3),
\quad  \gz_\so(0,0,1,2)=-\frac12\gz(3)+3\gz(\bar1,2),\\
\ &\gz_\so(0,1,0,2)=\frac54\gz(3)-3\gz(\bar1,2),
\quad \ \, \gz_\so(1,0,2,0)=\gz_\so(0,1,2,0)=\gz(3),\\
\ &\gz_\so(0,1,1,1)=\frac34\gz(3),
\quad \gz_\so(1,1,1,0)=2\gz(3),
\quad  \gz_\so(1,0,1,1)=\frac58\gz(3),\\
\ &\gz_\so(0,0,2,1)=\frac14\gz(3),
\quad\gz_\so(1,1,0,1)=\frac{11}{8}\gz(3).
\end{array}$$
Note that the first three values do not satisfy the conditions
of Tsumura's Theorem in \cite{Tsu1} so there is no contradiction
even if they can not be rational multiples of $\gz(3)$ by
Broadhurst conjecture.
On the other hand, $\gz_\so(0,1,1,1)=\frac34\gz(3)$ does not
satisfy the conditions either but it is reduced to depth one.
Therefore it would be interesting to find out the exact conditions
on the reducibility of special values of $\gz_\so$ to Riemann
zeta values.

\begin{rem}
The smallest weight of $\gz_\so(s_1,s_2,s_3,s_4)$ is three because of
the convergence restraint \eqref{equ:domainOfConv}. We have listed
all the possible regular weight three values
in the above and find that
they do not span the whole weight three space of
alternating Euler sums because the whole space has dimension three according to
Broadhurst conjecture (see Remark~\ref{rem:parity}). In fact, this was
expected because $\gz_\so(s_1,s_2,s_3,s_4)$ only has depth two
while all the depth one and two alternating Euler sums only generate a
proper subspace of dimension two over $\Q$, and in fact, this subspace
can be generated by $\gz(3)$ and $\gz(\bar1,2)$. We believe this phenomenon
happens in general.
\end{rem}
\begin{conj}
Let $w$ be a positive integer $\ge 3$.
Let $V_w$ be the $\Q$-vector space spanned
by all the weight $w$ special values of $\gz_\so(s_1,s_2,s_3,s_4)$
where $s_1,s_2,s_3$ and $s_4$ are nonnegative integers
such that \eqref{equ:domainOfConv} are satisfied,
$s_1+s_2+s_3>0$ and $s_1+s_2+s_4>0$. Then $V_w$ coincides
with the $\Q$-vector space spanned by all the weight $w$
alternating Euler sums of depth at most two.
\end{conj}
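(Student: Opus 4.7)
The plan is to establish the two set inclusions separately. One direction is essentially Theorem~\ref{thm:main}: every regular weight-$w$ value $\gz_\so(s_1,s_2,s_3,s_4)$ is produced there as an explicit $\Q$-linear combination of depth $\leq 2$ alternating Euler sums of weight $w$, so $V_w$ is contained in the $\Q$-span of such sums. The exceptional cases $s_1=s_2=s_3=0$ and $s_1=s_2=s_4=0$, in which a weight-lowered Riemann zeta value appears, are ruled out by the hypotheses $s_1+s_2+s_3>0$ and $s_1+s_2+s_4>0$, so this containment holds cleanly.

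For the reverse inclusion I would try to realize each standard generator of the target space---$\gz(w)$, $\gz(a,b)$, $\gz(\bar a,b)$, $\gz(a,\bar b)$, $\gz(\bar a,\bar b)$ for $a+b=w$, $a\geq 2$, $b\geq 1$---as an element of $V_w$. The non-alternating generators come out of Case~(i) of the proof of Theorem~\ref{thm:main}: the identity $\gz_\so(s_1,s_2,s_3,0) = \gz_\MT(s_1,s_2;s_3)$ together with Proposition~\ref{prop:ZBred} supplies every depth-$2$ MZV of weight $w$, and specific choices (as in the weight-$3$ table where $\gz_\so(1,0,2,0)=\gz(3)$), together with the analogous higher-weight identities coming out of Case~(i), supply $\gz(w)$ itself.

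For the alternating generators, Case~(iii.a) directly produces the families $\gz(\ol{A},\ol{B})$ and $\gz(A,\ol{B})$ inside regular $\gz_\so$ values; a triangular induction on the parameters $(s_1,s_2,s_4)$ should then let one isolate each such sum individually. The remaining family $\gz(\bar a, b)$ is not produced by any case, but can be recovered from the stuffle identity $\gz(b)\gz(\bar a) = \gz(b,\bar a) + \gz(\bar a,b) + \gz(\ol{a+b})$ together with the values already placed in $V_w$, using the fact that $\gz(\ol{a+b})$ is a rational multiple of $\gz(a+b)$ and hence lies in $V_w$ by the previous step.

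The main obstacle I anticipate lies in the inversion of the triangular systems. Cases (iii.a), (iii.b) and (iii.c.2) each contribute linear combinations of several alternating-Euler-sum families simultaneously, and Case~(iii.c.2) in particular mixes them through the delicate limit in~\eqref{equ:limit1} that couples sums of different signatures. One must check that, as $(s_1,\dots,s_4)$ ranges over all convergent regular choices, these coupled systems can be jointly inverted to recover each generator. Formulating this inversion uniformly in $w$, rather than weight-by-weight, is the substantive difficulty, and is presumably the reason the statement appears as a conjecture rather than a theorem.
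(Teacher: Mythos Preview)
The statement is labeled a \emph{Conjecture} in the paper, and the paper offers no proof: it only remarks that the assertion has been verified computationally for weights $w\le 5$. So there is no ``paper's proof'' to compare against, and your proposal should be read as an attempt to go beyond what the paper establishes.

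Your forward inclusion is fine. Theorem~\ref{thm:main} produces each regular $\gz_\so(s_1,s_2,s_3,s_4)$ as a $\Q$-linear combination of weight-$w$ alternating Euler sums of depth $\le 2$, and the hypotheses $s_1+s_2+s_3>0$, $s_1+s_2+s_4>0$ exclude precisely the two exceptional cases; hence $V_w$ sits inside the target space.

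For the reverse inclusion your argument has a concrete gap in the stuffle step. You write $\gz(\bar a,b)=\gz(b)\gz(\bar a)-\gz(b,\bar a)-\gz(\ol{a+b})$ and claim the right side lies in $V_w$. Even granting that $\gz(b,\bar a)$ and $\gz(\ol{a+b})$ are in $V_w$, the product $\gz(b)\gz(\bar a)$ need not be: $V_w$ is only a $\Q$-vector subspace of $\R$, not an algebra, and there is no mechanism in the definition of $V_w$ (a span of depth-two $\gz_\so$ values of a single weight) that absorbs products of lower-weight values. So this step does not go through as stated, and the family $\gz(\bar a,b)$ remains unaccounted for.

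Separately, the ``triangular induction'' you sketch for extracting $\gz(\ol A,\ol B)$ and $\gz(A,\ol B)$ from Case~(iii.a) is only a hope. That formula expresses one $\gz_\so$ value as a Mordell--Tornheim value plus two entire sums of alternating Euler sums indexed by $a_1$ and $a_2$; nothing guarantees that varying $(s_1,s_2,s_4)$ over the admissible range gives an invertible system modulo the non-alternating part. You rightly flag this as the substantive obstruction, and it is exactly why the paper records the statement as a conjecture rather than a theorem. In short: one inclusion is proved, the other is not, and your outline for the missing direction contains a real error (the stuffle step) on top of the acknowledged incompleteness.
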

We have verified this conjecture
for all the weights up to weight five.

We now list all the 25 regular weight four values below:
%{\allowdisplaybreaks
$$\begin{array}{rl}
&\gz_\so(0,0,2,2)=\frac38\gz(4)-4\gz(\bar{3},1),
\quad  \gz_\so(2,1,0,1)=\frac78\gz(4)-\gz(\bar{3},1), \\
\ &\gz_\so(0,1,2,1)=\frac12\gz(4)-4\gz(\bar{3},1),
\quad \gz_\so(0,2,0,2)=\frac18\gz(4)+4\gz(\bar{3},1), \\
\ &\gz_\so(0,2,1,1)=\frac14\gz(4)+4\gz(\bar{3},1),
\quad \gz_\so(1,2,0,1)=\frac34\gz(4)+2\gz(\bar{3},1), \\
\ &\gz_\so(1,0,1,2)=\frac3{16}\gz(4)-\gz(\bar{3},1),
\quad \gz_\so(2,0,1,1)=\frac38\gz(4)+\gz(\bar{3},1), \\
\ &\gz_\so(1,1,0,2)=\frac5{16}\gz(4)-\gz(\bar{3},1),
\quad \gz_\so(1,1,1,1)=\frac12\gz(4)-2\gz(\bar{3},1), \\
\ &\gz_\so(0,1,0,3)=\frac{17}{24}\gz(4)-\frac73\gz(\bar{1},3),
\quad \gz_\so(0,0,3,1)=-\frac14\gz(4)+4\gz(\bar{3},1),\\
\ & \gz_\so(0,0,1,3)=-\frac7{12}\gz(4)+\frac73\gz(\bar{1},3),
\quad \gz_\so(1,0,2,1)=2\gz(\bar{3},1),\\
\ &\gz_\so(1,0,3,0)=\gz_\so(0,1,3,0)=\frac14\gz(4),
\quad \gz_\so(0,1,1,2)=\frac18\gz(4), \\
\ &\gz_\so(2,0,2,0)=\gz_\so(0,2,2,0)=\frac34\gz(4),
\quad \gz_\so(2,2,0,0)=\frac52\gz(4) , \\
\ &\gz_\so(2,1,1,0)=\gz_\so(1,2,1,0)=\frac54\gz(4),
\quad \gz_\so(2,0,0,2)=\frac9{32}\gz(4), \\
 \ &\gz_\so(1,0,0,3)=-\frac{19}{96}\gz(4)+\frac76\gz(\bar{1},3)-\frac12\gz(\bar{3},1),
\quad \gz_\so(1,1,2,0)=\frac12\gz(4).
\end{array}$$
%}
There are 46 regular weight five values and 74 regular weight six values.
The following are some interesting weight six ones:
%{\allowdisplaybreaks
$$\begin{array}{rl}
&\gz_\so(0,2,2,2)=\frac{1}{105}\gz(2)^3=\frac{1}{22680}\pi^6=.04238929428\dots\\
&\gz_\so(2,0,2,2)=\frac{1}{210}\gz(2)^3+\frac{3}{8}\gz(3,3)-\frac{2}{3}\gz(\bar{3},3)=.03772580207\dots\\
&\gz_\so(2,2,0,2)=\frac{4}{105}\gz(2)^3-\frac{3}{8}\gz(3,3)+\frac{2}{3}\gz(\bar{3},3)=.15302602205\dots\\
&\gz_\so(2,2,2,0)=\gz_\MT(2,2;2)=\frac{8}{105}\gz(2)^3=\frac{1}{2835}\pi^6 =.3391143543  \dots\\
&\gz_\so(1,2,2,1)=\frac{1}{30}\gz(2)^3-\frac{3}{4}\gz(3,3)+\frac{4}{3}\gz(\bar{3},3)=.1153002199792 \dots\\
&\gz_\so(2,1,1,2)=\frac{1}{60}\gz(2)^3=\frac{1}{12960}\pi^6=.07418126500\dots\\
&\gz_\so(1,1,2,2)=\frac{1}{84}\gz(2)^3-\frac{3}{8} \gz(3,3)+\frac{2}{3}\gz(\bar{3},3)=.0364554628649\dots \\
&\gz_\so(1,2,1,2)=\frac{3}{140}\gz(2)^3-\frac{3}{8}\gz(3,3)+\frac{2}{3}\gz(\bar{3},3)=.0788447571142\dots\\
&\gz_\so(2,1,2,1)=\frac{3}{140}\gz(2)^3+\frac{3}{8}\gz(3,3)-\frac{2}{3}\gz(\bar{3},3)=.1119070670077\dots\\
&\gz_\so(2,2,1,1)=\frac{23}{420}\gz(2)^3-\frac{3}{8}\gz(3,3)+\frac{2}{3}\gz(\bar{3},3)=.2272072869870\dots
\end{array}$$
%}

%\gz_\so(2,2,2,1)=&\frac32\gz(2)\gz(5)-\frac{39}{16}\gz(7)=.1006653203252

Finally, we return to the question related to Zagier's original
version of Witten's zeta function attached to $\so$.
We would like to know the rational coefficients in \eqref{equ:witten} so
the following four values can shed some light on this:
$$\begin{array}{rl}
&\gz_\so(2,2,2,2)=\frac{3}{700}\gz(2)^4 =\frac{2\cdot 3}{5\cdot 9!}\pi^8=.031377417381\dots\\ %302400
&\gz_\so(4,4,4,4)=\frac{4311}{297797500}\gz(2)^8=\frac{2^5\cdot 479}{5\cdot 17! }\pi^{16}=.0007759700\dots\\
&\gz_\so(6,6,6,6)=\frac{2490861}{45593675752625}\gz(2)^{12}
=\frac{ 2^7\cdot 5\cdot43\cdot 19309 }{3^2\cdot 7\cdot 13\cdot 23!}\pi^{24}=.00002144010\dots\\
%830287/33082502670541868544000
&\gz_\so(8,8,8,8)=\frac{138835874547}{670007833199392187500}\gz(2)^{16}
=\frac{2^8\cdot 13\cdot241\cdot 64009163 }{5\cdot 17\cdot 31!} \pi^{32}=.000000595384  \dots.
%15426208283/210018415145770264274841600000000.
\end{array}$$
For $\gz_\so(2,2,2,2)$ we can reduce our linear combination of alternating
Euler sums to the correct multiple of $\gz(2)^4$. But we can not perform
the same for the other three because there isn't any table of relations available
for alternating Euler sums of weight greater than 15 although we can check
numerically that our formulas are all correct (with the errors bounded by $10^{-100}$).
However, it turns out that by analytical methods Komori et al. \cite{KMT4} have
found a closed formula of $\gz_\so(2m,2m,2m,2m)$ which implies that
\begin{align*}
c(m)=\frac{2^{8m-3}}{(8m)!}\sum_{\nu=0}^m  B_{2\nu}B_{8m-2\nu}&{8m\choose 2\nu}
\Bigg\{\sum_{\mu=0}^{2m-1} \left(\frac{2^{2\nu-1}}{2^{2m+\mu}}-(-1)^\mu\right)
{4m-\mu-2\choose 2m-1}{2m-2\nu+\mu\choose 2m-2\nu}\\
+& \sum_{\mu=0}^{2m-2\nu} \left(\frac1{2^{2m+\mu}}+(-1)^\mu\right)
{4m-2\nu-\mu-1\choose 2m-1}{2m-1+\mu\choose 2m-1}\Bigg\},
\end{align*}
where $B_{2\nu}$ are Bernoulli numbers.
So we have come around a full circle and back to the original
values we began with, but this time the rational coefficients
of the $\pi$-powers are determined precisely.

\end{document}